\documentclass{article}

\usepackage{amsfonts, amsmath, amssymb, amsthm, amscd}
\usepackage{ascmac}
\usepackage[dvipdfm]{color}
\usepackage{verbatim}
\usepackage[dvips]{graphicx}
\usepackage{graphics}
\usepackage[all,2cell]{xypic}

\objectmargin+{1mm}
\labelmargin+{0.8mm}
\SelectTips{cm}{12}

\setlength{\topmargin}{-60pt}
\setlength{\headheight}{12truept}
\setlength{\headsep}{25pt}
\setlength{\footskip}{37pt}
\setlength{\hoffset}{10mm}
\setlength{\voffset}{39pt}
\setlength{\oddsidemargin}{-7mm}
\setlength{\evensidemargin}{-7mm}
\setlength{\textheight}{237mm}
\setlength{\textwidth}{153mm}

\UseAllTwocells


\theoremstyle{plain}  
\newtheorem{thm}{Theorem}[section]

\newtheorem{lem}[thm]{Lemma}
\newtheorem{prop}[thm]{Proposition}

\theoremstyle{definition}

\newtheorem{para}[thm]{}

\theoremstyle{remark}


\DeclareMathOperator{\cA}{\mathcal{A}}
\DeclareMathOperator{\cB}{\mathcal{B}}
\DeclareMathOperator{\cC}{\mathcal{C}}
\DeclareMathOperator{\calD}{\mathcal{D}}
\DeclareMathOperator{\cE}{\mathcal{E}}
\DeclareMathOperator{\cF}{\mathcal{F}}

\DeclareMathOperator{\cT}{\mathcal{T}}


\DeclareMathOperator{\bE}{\mathbf{E}}


\DeclareMathOperator{\bbK}{\mathbb{K}}


\UseAllTwocells


\def\sn{\smallskip\noindent}



\newcommand{\cf}{\textrm{cf.}\;}
\newcommand{\Ch}{\operatorname{\bf Ch}}

\newcommand{\Coker}{\operatorname{Coker}}
\newcommand{\Cone}{\operatorname{Cone}}

\newcommand{\Homo}{\operatorname{H}}

\newcommand{\id}{\operatorname{id}}

\newcommand{\isoto}{\overset{\scriptstyle{\sim}}{\to}}

\newcommand{\onto}[1]{\stackrel{#1}{\to}}
\newcommand{\op}{\operatorname{op}}

\newcommand{\qis}{\operatorname{qis}}

\newcommand{\rdef}{\twoheadrightarrow}
\newcommand{\RelEx}{\operatorname{\bf RelEx}}
\newcommand{\rinc}{\hookrightarrow}

\title{Negative $K$-groups of abelian categories}
\date{}

\author{Satoshi Mochizuki}

\begin{document}

\maketitle

\begin{abstract}
We prove that negative $K$-groups of small abelian categories 
are trivial.
\end{abstract}

\section*{Introduction}
\label{sec:Intro}

\sn
In the celebrated paper \cite{Sch06}, 
Schlichting predicted that for any small abelian 
category $\cA$ and any positive integer $n$, 
the negative $K$-group $\bbK_{-n}\cA$ 
is trivial. 
He proved that 
it is true for $n=1$ and any $\cA$ and 
for any $n$ and any noetherian abelian category $\cA$. 
The goal of this paper, we will prove this conjecture 
for any $n$ and any $\cA$. 
We use the technique established in \cite{Moc13} and \cite{HM13}. 
The main point is making use of higher derived categories $\calD^n\cA$ 
of $\cA$. 
(For definition, see \ref{para:higehr derived categories}.) 
In \cite{HM13}, we give an interpretation of negative 
$K$-groups as the obstruction groups of idempotent completeness 
of higher derived categories. 
(See \ref{para:higehr derived categories} $\mathrm{(2)}$.) 
The main theorem is the following.

\begin{thm}
\label{thm:S conj}
For any abelian category $\cA$ and for any 
positive integer $n\geq 2$, 
$\calD^n\cA$ is trivial. 
In particular, if $\cA$ is essentially small, 
the $-m$-th $K$-group
$\bbK_{-m}\cA$ of $\cA$ is trivial for any $m>0$. 
\end{thm}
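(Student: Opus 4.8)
The plan is to establish the triviality of the higher derived categories first, and then read off the vanishing of the negative $K$-groups formally. Throughout, triviality of $\calD^{n}\cA$ is meant in the strong sense that its non-connective $K$-theory spectrum $\bbK(\calD^{n}\cA)$ is contractible; this is genuinely stronger than the splitting of idempotents, which would only recover Schlichting's theorem one step up. Granting the triviality, the ``in particular'' clause is the easy half. By the obstruction-theoretic description recalled in \ref{para:higehr derived categories}$\,(2)$, the group $\bbK_{-m}\cA$ is the idempotent-completeness defect of $\calD^{m}\cA$, and that defect is computed by the next level $\calD^{m+1}\cA$. Since $m+1 \ge 2$ for every $m > 0$, the triviality of all $\calD^{n}\cA$ with $n \ge 2$ makes each such defect vanish, giving $\bbK_{-m}\cA = 0$ for all $m > 0$.

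The crux is therefore the assertion that $\bbK(\calD^{n}\cA) \simeq 0$ for $n \ge 2$, and my approach is to show that $\calD^{n}\cA$ admits countable coproducts at every level $n \ge 2$ and then to run an Eilenberg swindle. Concretely, the endofunctor $\Theta X = \bigoplus_{i \ge 0} X$ is exact, since coproducts are exact in a triangulated category that has them, and it satisfies $\Theta \cong \id \oplus \Theta$ by reindexing; this identity forces $\bbK(\calD^{n}\cA) \simeq 0$, which is the desired triviality. The decisive point is that these countable coproducts are available precisely from the second level onward: the bottom level $\calD^{1}\cA$, being an ordinary bounded derived category, has no infinite coproducts, whereas the second-level construction of \ref{para:higehr derived categories} introduces an unbounded direction that supplies them. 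This is exactly the improvement over \cite{Sch06}: Schlichting's flasque model sat one level too low, so he could only run the swindle on $K_{0}$ and needed a Noetherian hypothesis to control the higher defects, while here the swindle is internal to $\calD^{n}\cA$ for an arbitrary $\cA$.

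The step I expect to be the main obstacle is establishing, for an arbitrary and possibly non-Noetherian abelian category $\cA$, that $\calD^{n}\cA$ really does possess the countable coproducts used above and that these coproducts are exact, so that $\Theta$ is a well-defined exact endofunctor and the swindle identity $\Theta \cong \id \oplus \Theta$ holds triangulatedly. In concrete terms this demands that the second-level construction of \ref{para:higehr derived categories} commute with countable colimits and keep them exact with no finiteness input whatsoever, and this is where the techniques of \cite{Moc13} and \cite{HM13} must be brought to bear in full. Once this exactness is in hand, everything else is a formal manipulation of the swindle identity together with the localization sequences linking consecutive levels, and the triviality propagates to all $n \ge 2$ at once.
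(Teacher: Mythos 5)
Your proposal rests on a step that is false, and it also proves the wrong statement. The swindle requires the endofunctor $\Theta X=\bigoplus_{i\geq 0}X$ to exist in $\calD^{n}\cA$, and you justify the countable coproducts by saying the second-level construction ``introduces an unbounded direction that supplies them.'' It does not: $\Sigma^{n}\cA$ is the iterated chain-complex category $\Ch^{n}\cA$, built degreewise out of $\cA$, so a coproduct of countably many copies of a single complex $X$ would require the countable self-coproduct $\bigoplus_{i\geq 0}X^{k}$ of each component in $\cA$ itself --- and a general essentially small abelian category has no infinite coproducts. Unboundedness supplies complexes of infinite length, not infinite direct sums, and passing to a Verdier quotient creates no colimits. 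So $\Theta$ is undefined and the swindle never starts. (A posteriori $\calD^{n}\cA$ has all coproducts, being zero, but that is the conclusion, not an input.) Separately, ``trivial'' in Theorem \ref{thm:S conj} means $\calD^{n}\cA$ is equivalent to the zero triangulated category; contractibility of $\bbK(\calD^{n}\cA)$ is strictly weaker (flasque categories have vanishing $K$-theory without being zero), and your claim that it is ``genuinely stronger than the splitting of idempotents'' is backwards --- vanishing $K$-theory does not imply idempotent completeness. Your deduction of the ``in particular'' clause, via the defect of $\calD^{m}\cA$ being ``computed by'' the $K$-theory of $\calD^{m+1}\cA$, is not among the facts recalled in \ref{para:higehr derived categories}; the paper needs no such step, since the zero category is idempotent complete and \ref{para:higehr derived categories} $\mathrm{(2)}$ applies directly.

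What the paper actually does is short and categorical, with no $K$-theory input until the last line. It identifies $\calD^{n}\cA$ with the Verdier quotient $\calD\cB/\calD\cB^{v}$, where $\cB=\Ch^{n-1}\cA$ and $v$ is the class of quasi-weak equivalences; fully faithfulness of $\calD_{\#}\cB^{v}\to\calD_{\#}\cB$ is granted by very strictness (\cite[3.9]{Moc13}, \cite[1.2]{HM13}); and Proposition \ref{prop:equiv of tricat} upgrades this to an equivalence using the Thomason--Trobaugh resolution criterion (Lemma \ref{lem:TT90}): for every $b$ the canonical epimorphism $\Cone\id_{b}[-1]\rdef b$ has contractible, hence $v$-trivial, source, so $\cB^{v}$ has enough objects to resolve; this settles $\calD_{+}$, duality settles $\calD_{-}$, and generation by $\calD_{\pm}$ settles $\calD$. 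The quotient is therefore the zero category. Your instinct that $n\geq 2$ is exactly where a new direction appears is correct in spirit, but the paper exploits it to manufacture resolutions by contractible complexes inside $\cB^{v}$ --- impossible at $n=1$, where $\cB=\cA$ has no cones and $\cB^{v}$ consists of zero objects --- not to manufacture infinite coproducts. A swindle-style argument would instead force you to embed $\cA$ into a cocomplete ambient category and control the quotient, which is essentially the route of \cite{Sch06} and is precisely where the noetherian hypothesis the paper eliminates comes from.
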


\sn
In section 1, we review several notions about relative exact categories 
established in \cite{Moc13} and \cite{HM13}. 
In section 2, we give a proof of the main theorem.

\section{Relative exact categories}
\label{sec:rel exact cat}

\sn
In this section, we recall the fundamental notions and properties 
for relative exact categories from \cite{HM13} and \cite{Moc13}.

\begin{para}[\bf Relative exact categories]
\label{para:ext axiom}
$\mathrm{(1)}$ 
A {\it relative exact category} $\bE=(\cE,w)$ is a pair of 
an exact category $\cE$ 
with a specific zero object $0$ 
and a class of morphisms in $\cE$ 
which is closed under finite compositions. 
Namely $w$ satisfies the following two axioms.\\
{\bf (Identity axiom).} 
For any object $x$ in $\cE$, 
the identity morphism $\id_x$ is in $w$.\\
{\bf (Composition closed axiom).} 
For any composable morphisms $\bullet \onto{a} \bullet \onto{b} \bullet$ 
in $\cE$, if $a$ and $b$ are in $w$, then $ba$ is also in $w$.\\
For any relative exact category $\bE$, 
we write $\cE_{\bE}$ and $w_{\bE}$ 
for the underlying exact category and 
the class of morphisms of $\bE$ 
respectively.\\
$\mathrm{(2)}$ 
A {\it relative exact functor} between relative exact categories 
$f:\bE=(\cE,w) \to (\cF,v)$ is 
an exact functor $f:\cE \to \cF$ such that 
$f(w)\subset v$ and $f(0)=0$. 
We denote the category of relative exact categories and relative exact functors 
by $\RelEx$.\\
$\mathrm{(3)}$ 
We write $\cE^w$ for the full subcategory of $\cE$ 
consisting of those object $x$ such that the canonical morphism 
$0 \to x$ is in $w$. 
We consider the following axioms.\\
{\bf (Strict axiom).} 
$\cE^w$ is an exact category such that 
the inclusion functor $\cE^w \rinc \cE$ is exact 
and reflects exactness.\\
{\bf (Very strict axiom).} 
$\bE$ satisfies the strict axiom and 
the inclusion functor $\cE^w \rinc \cE$ induces a fully faithful 
functor $\calD_b(\cE^w) \rinc \calD_b(\cE)$ on the bounded derived categories.\\
We denote the category of strict (resp. very strict) relative exact categories 
by $\RelEx_{\operatorname{strict}}$ 
(resp. $\RelEx_{\operatorname{vs}}$).
\end{para}

\begin{para}[\bf Derived category]
\label{para:derivd cat}
We define the {\it derived categories} of 
a strict relative exact category $\bE=(\cE,w)$ by the following formula
$$\calD_{\#}(\bE):=\Coker(\calD_{\#}(\cE^w) \to \calD_{\#}(\cE))$$
where $\# =b$, $\pm$ or nothing. 
Namely $\calD_{\#}(\bE)$ is a Verdier quotient of $\calD_{\#}(\cE)$ 
by the thick subcategory of $\calD_{\#}(\cE)$ 
spanned by the complexes in $\Ch_{\#}(\cE^w)$.
\end{para}

\begin{para}[\bf Quasi-weak equivalences]
\label{para:quasi-weak equiv}
Let 
$P_{\#}:\Ch_{\#}(\cE) \to \calD_{\#}(\bE)$ 
be the canonical quotient functor. 
We denote the pull-back of the class of all isomorphisms in 
$\calD_{\#}(\bE)$ 
by $qw_{\#}$ or simply $qw$. 
We call a morphism in $qw$ a {\it quasi-weak equivalence}. 
We write $\Ch_{\#}(\bE)$ for a pair $(\Ch_{\#}(\cE),qw)$. 
We can prove that 
$\Ch_{\#}(\bE)$ is a complicial biWaldhausen 
category in the sense of \cite[1.2.11]{TT90}. 
In particular, it is a relative exact category. 
The functor $P_{\#}$ induces an equivalence of triangulated categories 
$\cT(\Ch_{\#}(\cE),qw)\isoto\calD_{\#}(\bE)$ 
(See \cite[3.2.17]{Sch11}). 
If $w$ is the class of all isomorphisms in $\cE$, 
then $qw$ is just the class of all quasi-isomorphisms in $\Ch_{\#}(\cE)$ 
and we denote it by $\qis$. 
\end{para}

\begin{para}[\bf Consistent axiom]
\label{para:consis axiom}
Let $\bE=(\cE,w)$ be a strict relative exact category. 
There exists the canonical functor $\iota^{\cE}_{\#}:\cE \to \Ch_{\#}(\cE)$ 
where $\iota^{\cE}_{\#}(x)^k$ is $x$ if $k=0$ and $0$ if $k\neq 0$. 
We say that $w$ (or $\bE$) satisfies 
the {\it consistent axiom} 
if $\iota^{\cE}_b(w)\subset qw$. 
We denote the full subcategory of 
consistent relative exact categories in $\RelEx$ by 
$\RelEx_{\operatorname{consist}}$. 
\end{para}

\begin{para}[\bf Higher derived categories]
\label{para:higehr derived categories}
(\cf \cite[3.1, 3.2]{HM13}). 
Let $\bE$ be a very strict consistent relative exact category and 
we denote $n$-th times iteration of $\Ch$ for $\bE$ 
by $\Sigma^{n}\bE$ and 
$\calD^n(\bE):=\calD_b(\Sigma^{n}\bE)$ 
the {\it{$n$-th higher derived category}} of $\bE$. 
Then for any positive integer $n$, we have\\
$\mathrm{(1)}$ 
$\bbK_{-n}(\bE) \simeq \bbK_0(\calD^n(\bE))$.\\
$\mathrm{(2)}$ 
$\bbK_{-n}(\bE)$ is trivial if and only if 
$\calD^n(\bE)$ is idempotent complete.\\
$\mathrm{(3)}$ 
The canonical functor $\Sigma \bE \to \Ch_b\Sigma\bE$ 
induces an equivalence of triangulated categories 
$\calD\bE\isoto\calD_b\Sigma\bE$.
\end{para}

\section{Proof of the main theorem}
\label{sec:vani for abel cat}

\sn
In this section, 
we prove Theorem~\ref{thm:S conj}. 
Let $\cA$ be an essentially small abelian category 
and $n$ a positive integer $n\geq 2$. 

\begin{para}
\label{para:setting of B}
We have the equalities 
\begin{equation}
\label{equ:higher derived cat}
\calD_n\cA\underset{\textbf{I}}{=}
\calD_b\Sigma^n\cA\underset{\textbf{II}}{\isoto}
\calD\Sigma^{n-1}\cA \underset{\textbf{I}}{=}
\calD\Ch^{n-1}\cA/\calD{(\Ch^{n-1}\cA)}^{w_{\Sigma^{n-1}\cA}}.
\end{equation}
Here the equalities \textbf{I} and \textbf{II} just come from definitions 
and \ref{para:higehr derived categories} $\mathrm{(3)}$ 
respectively. 
For simplicity, 
we put $\cB=\Ch^{n-1}\cA$, 
$\cF=\Ch^{n-2}\cA$ and 
$v=w_{\Sigma^{n-1}\cA}$. 
Then the pair $(\cB,v)$ 
is a complicial exact category with weak equivalences 
or a bicomplicial pair in the sense of \cite{Sch11} or \cite{Moc13}. 
Therefore $(\cB,v)$ is very strict by \cite[3.9]{Moc13} 
and hence 
the functor 
$\calD_{\#}\cB^v \to \calD_{\#}\cB$ 
induced from the inclusion functor $\cB^v \rinc \cB$ 
is fully faithful for $\#\in\{b,\pm,\operatorname{nothing}\}$ 
by \cite[1.2]{HM13}. 
\end{para}

\sn
By virtue of equality $\mathrm{(\ref{equ:higher derived cat})}$, 
Theorem~\ref{thm:S conj} follows from 
Proposition~\ref{prop:equiv of tricat} below.

\begin{prop}
\label{prop:equiv of tricat}
The functor 
$\calD_{\#}\cB^v \to \calD_{\#}\cB$ 
induced from the inclusion functor $\cB^v \rinc \cB$ 
is an equivalence of triangulated categories 
for $\#\in\{\pm,\operatorname{nothing}\}$.
\end{prop}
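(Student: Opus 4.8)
\noindent
The plan is to deduce the statement from the density (essential surjectivity) of the inclusion, since full faithfulness for $\#\in\{\pm,\nothing\}$ has already been recorded in \ref{para:setting of B}. Because $(\cB,v)$ is a complicial exact category with weak equivalences, it carries a translation functor $T$ and a functorial mapping cone, and every contractible object is weakly equivalent to $0$, hence lies in $\cB^v$. The engine of the proof is the cone of the identity: for each $C\in\cB$ the object $\Cone(\id_C)$ is contractible, so it belongs to $\cB^v$, and it sits in two functorial conflations
\[
C\rinf\Cone(\id_C)\rdef TC,\qquad T^{-1}C\rinf\Cone(\id_{T^{-1}C})\rdef C .
\]

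First I would splice these conflations. Splicing the second family downward produces an exact sequence $\cdots\to\Cone(\id_{T^{-2}C})\to\Cone(\id_{T^{-1}C})\to C\to 0$ in $\cB$ whose terms all lie in $\cB^v$; this is an acyclic resolution of $C$, bounded above in the outer degree, defining an object $P_\bullet(C)\in\Ch_-(\cB^v)$ together with an augmentation $P_\bullet(C)\isoto C$ in $\calD_-\cB$. Dually, splicing the first family upward gives a coresolution $0\to C\to\Cone(\id_C)\to\Cone(\id_{TC})\to\cdots$, an object $Q^\bullet(C)\in\Ch_+(\cB^v)$ with $C\isoto Q^\bullet(C)$ in $\calD_+\cB$. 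Both constructions are functorial in $C$, being assembled from the functor $\Cone(\id_{(-)})$ and the natural conflations above.

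Next I would promote these resolutions from objects to arbitrary complexes. Viewing $Q^\bullet$ as an exact functor $\cB\to\Ch_+(\cB^v)$, applying it termwise to a complex $X\in\Ch_+(\cB)$ and totalizing yields $\Tot Q^\bullet(X)\in\Ch_+(\cB^v)$ together with a quasi-isomorphism $X\isoto\Tot Q^\bullet(X)$; hence every object of $\calD_+\cB$ lies in the essential image, and with full faithfulness the functor is an equivalence for $\#=+$. The case $\#=-$ is handled symmetrically using $P_\bullet$, and the unbounded case $\#=\nothing$ by the analogous two-sided (Cartan--Eilenberg type) resolution, or by bootstrapping from the two half-bounded equivalences along the triangles relating an unbounded complex to its good truncations.

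The main obstacle is exactly the infinitude of these resolutions together with the totalization step. The resolutions are genuinely unbounded, so they represent $C$ only inside $\Ch_\#(\cB^v)$ for $\#\neq b$; this is precisely why the statement holds for $\#\in\{\pm,\nothing\}$ and must fail for $\#=b$, where the obstruction to idempotent completeness---and hence the negative $K$-theory---lives. The delicate points to verify are that the spliced sequences are admissibly exact for the exact structure of $\cB$ (so that the augmentations are quasi-isomorphisms in $\calD_\#\cB$), that $\Cone(\id_{(-)})$ really lands in $\cB^v$ in the iterated complicial setting of $\cB=\Ch^{n-1}\cA$, and that the totalization in the unbounded case is well defined as an object of $\Ch(\cB^v)$; the one-sided boundedness of $P_\bullet$ and $Q^\bullet$ is what makes the half-bounded totalizations degreewise finite, and it is the control of convergence that must be supplied for $\#=\nothing$.
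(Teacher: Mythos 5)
Your proposal is correct, and it rests on exactly the same key object as the paper's proof: the cone of the identity, which is contractible, hence lies in $\cB^v$, and maps onto any given object (the paper's epimorphism $\Cone\id_b[-1]\rdef b$ is precisely your conflation $T^{-1}C\rinf\Cone(\id_{T^{-1}C})\rdef C$). The structure also matches: half-bounded cases first, by symmetry/duality, then the unbounded case by generation along truncation triangles, with full faithfulness quoted from \ref{para:setting of B} in both cases. Where you genuinely diverge is the engine for the half-bounded step. The paper verifies the ``enough objects to resolve'' hypothesis $(\ast)$ and then cites the Thomason--Trobaugh resolution lemma \cite[1.9.5, 1.9.7]{TT90} as a black box; that lemma builds, by induction on degree, a quasi-isomorphism to each individual complex from a complex of $\cB^v$-objects, so it needs neither functoriality of resolutions nor any totalization. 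You instead re-prove that lemma by hand in this special case: functorial object-wise (co)resolutions obtained by splicing the cone-of-identity conflations, applied termwise and totalized, Cartan--Eilenberg style. Your route is self-contained and constructive, and it works precisely because $\Cone(\id_{(-)})$ is functorial --- an extra feature of the complicial setting that the paper's citation never needs; the price is the convergence and exactness checks you flag at the end. Those checks do go through, and more easily than you suggest: since $\cA$ is abelian, $\cB=\Ch^{n-1}\cA$ is again abelian with its degreewise exact structure, so the spliced sequences are honest exact sequences of complexes, and the half-bounded totalizations are degreewise finite, so the augmentations are quasi-isomorphisms by the standard filtration argument. Your closing remark about $\#=b$ is also correct and is a good sanity check: there the quotient $\calD_b\cB/\calD_b\cB^v$ is a lower higher derived category of $\cA$, which is nontrivial, so essential surjectivity must fail, exactly because your resolutions are unbounded on one side.
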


\begin{proof}[\bf Proof]
For $\#=+$, we use lemma \ref{lem:TT90} below. 
We check the condition $\mathrm{(\ast)}$ 
in \ref{lem:TT90} for $\cC=\cB$ and $\calD=\cB^v$. 
For any complex $b$ in $\Ch\cF$, 
there is a canonical epimorphism $\Cone\id_b[-1]\rdef b$ in $\Ch\cF$ 
with $\Cone\id_b[-1]\in \cB^v$. 
Hence we obtain the result for $\#=+$. 
Since we have $\calD_{-}\cB={(\calD_{+}\cB^{\op})}^{\op}$ 
and $\calD_{-}\cB^v={(\calD_{+}{(\cB^{\op})}^{v^{\op}})}^{\op} $, 
we also get the result for $\#=-$. 
Finally notice that $\calD\cB$ 
is generated by $\calD_{\pm}\cB\overset{\sim}{\leftarrow}
\calD_{\pm}\cB^v$. 
Therefore $\calD\cB^v \to \calD\cB$ is essentially surjective 
and we complete the proof.
\end{proof}

\begin{lem}
\label{lem:TT90}
{\rm (\cf \cite[1.9.5, 1.9.7]{TT90})}. 
Let $\cC$ be an abelian category and 
$\calD$ an idempotent complete strict exact subcategory of $\cC$. 
If ``$\calD$ has enough objects to resolve" in the following 
sense $\mathrm{(\ast)}$, 
then the inclusion functor $\Ch_{+}\calD \to \Ch_{+}\cC$ 
induces an equivalence of triangulated categories 
$\calD_{+}\calD\isoto\calD_{+}\cC$.\\
$\mathrm{(\ast)}$ 
For any integer $k$, any complex $x$ in $\Ch_{+}\cC$ 
such that $\Homo_i x=0$ for $i\leq k$ 
and any epimorphism in $\cC$, 
$a\rdef \Homo_{k+1}x$, 
then there exists an object $d$ in $\calD$ 
and a morphism $d\to a$ such that 
the composition is an epimorphism in $\cC$.
\qed
\end{lem}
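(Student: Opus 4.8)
The plan is to establish the two halves of an equivalence for the functor $\calD_+\calD \to \calD_+\cC$, with the resolution hypothesis $(\ast)$ providing the only nonformal ingredient. The guiding principle is that $(\ast)$ allows one to replace any bounded below complex over $\cC$ by a quasi-isomorphic complex whose terms lie in $\calD$; since $(\ast)$ speaks of \emph{epimorphisms} onto homology these are projective-flavoured resolutions, which is why the statement concerns $\calD_+$ rather than $\calD_-$. Throughout I use that $\calD \rinc \cC$ reflects exactness, so that a complex of $\calD$-objects is acyclic in $\calD$ exactly when it is acyclic in $\cC$ and homology may be computed in either category.

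First I would prove essential surjectivity by building, for each $x \in \Ch_+\cC$, a complex $p \in \Ch_+\calD$ together with a quasi-isomorphism $f \colon p \to x$. After a shift assume $\Homo_i x = 0$ for $i<0$, and construct an ascending tower of termwise-$\calD$ complexes $p^{(k)}$ with maps $f^{(k)} \colon p^{(k)} \to x$ such that $\Homo_i(\Cone f^{(k)})=0$ for $i \le k$, adjoining at stage $k$ a single cell in degree $k+1$. The obstruction to raising the connectivity of the cone by one is $\Homo_{k+1}(\Cone f^{(k)})$; applying $(\ast)$ to the canonical epimorphism $Z_{k+1}(\Cone f^{(k)}) \rdef \Homo_{k+1}(\Cone f^{(k)})$ from the cycle object yields a $d \in \calD$ and a map $d \to Z_{k+1}(\Cone f^{(k)})$ covering this homology. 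Adjoining $d$ in degree $k+1$, with boundary and lift to $x$ read off from the two components of $d \to Z_{k+1}(\Cone f^{(k)}) \rinc (\Cone f^{(k)})_{k+1}$, produces $f^{(k+1)}$ with the connectivity raised. The full force of $(\ast)$ — that it treats an \emph{arbitrary} epimorphism onto $\Homo_{k+1}$, not merely the identity — is exactly what makes this step legal: because $d$ is only assumed to lie in $\calD$ and need not be projective, one cannot lift a map into the homology through a projective cover, but one can lift it through the cycles, and that is precisely what $(\ast)$ supplies. Since degree $m$ is touched only at stage $m-1$, the term $p_m$ stabilises and lies in $\calD$, so $p$ is defined degreewise with no infinite colimit, is bounded below, and in each degree $\Cone f$ agrees with $\Cone f^{(k)}$ for large $k$; hence $\Homo(\Cone f)=0$ and $f$ is a quasi-isomorphism.

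Next I would deduce full faithfulness from essential surjectivity by a calculus-of-fractions argument. For $p,q \in \Ch_+\calD$, a morphism $p \to q$ in $\calD_+\cC$ is represented by a roof $p \xleftarrow{s} x \xrightarrow{g} q$ with $s$ a quasi-isomorphism and $x \in \Ch_+\cC$; resolving $x$ by a quasi-isomorphism $t \colon \tilde x \to x$ with $\tilde x \in \Ch_+\calD$ converts it into the roof $p \xleftarrow{st} \tilde x \xrightarrow{gt} q$ entirely inside $\Ch_+\calD$, in which $st$ is again a quasi-isomorphism, giving surjectivity of $\Hom_{\calD_+\calD}(p,q) \to \Hom_{\calD_+\cC}(p,q)$. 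For injectivity I would apply the same resolution to a complex $w$ over $\cC$ that witnesses, as a common refinement, the equality in $\calD_+\cC$ of two roofs over $\calD$; resolving $w$ inside $\Ch_+\calD$ transports the refinement and its homotopies into $\calD$ and forces the two classes to coincide already in $\calD_+\calD$. Idempotent completeness of $\calD$ is used here to split the contractible $\calD$-summands that arise when two resolutions of a single complex are compared, so that the comparison never leaves $\calD$.

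The step I expect to be the main obstacle is the bookkeeping of the inductive resolution: one must verify that adjoining $d$ in degree $k+1$ truly annihilates $\Homo_{k+1}(\Cone f^{(k)})$ without disturbing the homology in degrees $\le k$ that has already been made to vanish, so that the connectivity of the cone strictly increases at each stage and the termwise limit is a genuine quasi-isomorphism with bounded below, termwise-$\calD$ source. Confirming that $(\ast)$ applies verbatim to the cones that occur — in particular that their homology vanishes in the prescribed range at every stage — is the crux; granted this, the equivalence is formal. This is the content of \cite[1.9.5, 1.9.7]{TT90}, to which the remaining routine verifications may be deferred.
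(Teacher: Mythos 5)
Your sketch is correct and is essentially the argument the paper itself relies on: the lemma is stated without proof, deferring to \cite[1.9.5, 1.9.7]{TT90}, and your inductive cell-attachment using $(\ast)$ applied to $Z_{k+1}(\Cone f^{(k)})\rdef \Homo_{k+1}(\Cone f^{(k)})$, followed by the roof-replacement argument for full faithfulness, is precisely the Thomason--Trobaugh resolution proof being cited. The only slightly loose point is your accounting of idempotent completeness, whose essential use is to show that a bounded-below complex over $\calD$ that is exact in $\cC$ is already acyclic in $\calD$ (split off the cycles inductively as direct summands), which is what makes $st$ invertible in $\calD_{+}\calD$; this is a refinement of your phrasing, not a gap.
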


\end{document}